\def\BBox{\kern  -0.2cm\hbox{\vrule width 0.2cm height 0.2cm}}
\newtheorem{theorem}{Theorem}[section]
\newtheorem{lemma}[theorem]{Lemma}
\newtheorem{definition}[theorem]{Definition}
\newtheorem{proposition}[theorem]{Proposition}
\newtheorem{remark}[theorem]{Remark}
\title{Families of small regular graphs of girth 7}
\author{ M. Abreu$^{1}$, G. Araujo-Pardo$^{2}$, C. Balbuena$^{3}$,
D. Labbate$^1$, J. Salas$^{3}$
\thanks{ Research   supported by the Ministerio de Educaci\'on y Ciencia,
Spain, the European Regional Development Fund (ERDF) under project
MTM2008-06620-C03-02;   and under the Catalonian Government
project 1298 SGR2009. CONACyT-M\'exico under project 57371 and
PAPIIT-M\'exico under project 104609-3. \newline \footnotesize{\em
Email addresses:} marien.abreu@unibas.it (M. Abreu),~
garaujo@matem.unam.mx (G. Araujo), ~ m.camino.balbuena@upc.edu (C.
Balbuena), \, \, ~ domenico.labbate@unibas.it (D. Labbate)}
 \\[2ex]
$^1${\footnotesize Dipartimento di Matematica, Informatica ed Economia, Universit\`{a} degli Studi della
Basilicata,}\\
{\footnotesize Viale dell'Ateneo Lucano, I-85100 Potenza, Italy.} \\$^2$
{\footnotesize Instituto de Matem\'{a}ticas, Universidad Nacional Aut\'{o}noma de M\'{e}xico,} \\
{\footnotesize M\'{e}xico D. F., M\'exico }\\
$^3${\footnotesize Departament de Matem\`atica Aplicada III, Universitat
Polit\`ecnica de Catalunya, }\\
{\footnotesize Campus Nord, Edifici C2, C/ Jordi Girona 1 i 3 E-08034 Barcelona,
Spain.} \\
}
\date{}
\begin{document}
\maketitle
\begin{abstract}
The first known families of cages arised from the incidence graphs of generalized polygons of order $q$, $q$ a prime power.
In particular,  $(q+1,6)$--cages have been obtained from the projective planes of order $q$. Morever, infinite families of small regular graphs of girth $5$ have been constructed performing algebraic operations on $\mathbb{F}_q$. 

In this paper, we introduce some combinatorial operations to construct new infinite families of small regular graphs of girth $7$ from the $(q+1,8)$--cages 
arising from the generalized quadrangles of order $q$, $q$ a prime power.
\end{abstract}

{\bf Keywords:}
Cages, girth, generalized quadrangles, latin squares.


\section{Introduction}

All graphs considered are finite, undirected and simple (without loops or
multiple edges). For definitions and notations not explicitly stated the reader may refer to
\cite{BM}, \cite{GR00} and \cite{LW94}.



 Let $G$ be a graph with vertex set
$V=V(G)$ and edge set
  $E=E(G)$.  The \emph{girth}  of a graph $G$ is the number $g=g(G)$ of edges in a
smallest cycle. For every $v\in V$, $N_G(v)$ denotes the \emph{neighbourhood} of $v$,
that is, the set of all vertices adjacent to $v$. The \emph{degree} of a vertex $v\in V$ is the
cardinality of    $N_G(v)$.   A graph is called
\emph{regular} if all the vertices have the same degree. A  \emph{$(k,g)$-graph} is a  $k$-regular graph with girth $g$.  Erd\H os and Sachs
 \cite{ES63}     proved the existence of  $(k,g)$-graphs
 for all values of $k$ and $g$ provided that $k \ge 2$. Thus most work carried
 out has  focused on constructing a smallest one
 \cite{AABL12-2,AFLN06,ABH10,BI73,B08,B09,B66,BMS95,E96,FH64,GH08,LU95,LUW97,M99,OW81,PBMOG04}.
A  \emph{$(k,g)$-cage} is a  $k$-regular graph with girth $g$ having the smallest possible
number of vertices. Cages have been  studied
intensely since they were introduced by
Tutte \cite{T47} in 1947.
Counting the numbers of vertices in the
distance partition with respect to a vertex yields a lower bound
$n_0(k,g)$ with the
precise form of the bound depending on whether $g$ is even or odd:

  \begin{equation}\label{lower} n_0(k,g) = \left\{ \begin{array}{ll} 1 + k + k(k-1) + \cdots
+ k(k-1)^{(g-3)/2} &\mbox{ if $g$ is odd};\\
2(1 +(k-1) +
\cdots + (k-1)^{g/2-1})&\mbox{ if $g$ is
even}.\end{array}\right.\end{equation}
 Biggs   \cite{B96}
 calls the \emph{excess} of a  $(k,g)$-graph $G$
the difference $|V(G)|-n_0(k,g)$. The    construction
of graphs with small excess is a difficult task.
Biggs is the author of a report on distinct methods for constructing  cubic cages
  \cite{B98}.
More details about constructions of cages can be found in the survey by Wong
\cite{W82} or in the  book  by Holton and Sheehan \cite{HS93} or in  the more
recent dynamic cage survey by Exoo and Jajcay \cite{EJ08}.

A  $(k,g)$-cage with $n_0(k,g)$ vertices and even girth  exist only when $g\in \{4,6,8,12\}$ \cite{FH64}. If $g=4$ they are the complete bipartite graph $K_{k,k}$, and for $g=6,8,12$ these graphs are the incidence graphs of generalized $g/2$-gons of order $k-1$.  This is the main reason for $(k,g)$-cages with
$n_0(k,g)$ vertices and even girth $g$  are called \emph{generalized polygon graphs}   \cite{B96}. In particular a $3$-gon of order $k-1$ is also known as a \emph{projective plane} of order $k-1$. The $4$-gons of order $k-1$ are called \emph{generalized quadrangles} of order $k-1$,
and, the $6$-gons of order $k-1$, \emph{generalized hexagons} of order $k-1$.  All these objets   are   known
to exist for all prime power values of $k-1$ \cite{B97,GR00,LW94}, and no example is known when $k-1$ is not a prime power.

In this article we focus on the case $g =8$. Let $q$ be a prime power. Our main objective is to give an explicit construction of small $(q+1,7)$--graphs for
$k=q+1 $. Next we present the contributions of this paper and in the following sections we do the corresponding proofs.


\section{Preliminaries}

It is well known \cite{PT, vanM} that $Q(4,q)$ and $W(3,q)$ are the only two classical generalized quadrangles with parameters $s=t=q$.
The generalized quadrangle $W(3,q)$ is the dual generalized of $Q(4,q)$, and they are selfdual for $q$ even.

In 1966 Benson \cite{B66} constructed $(q+1,8)$--cages from the generalized quadrangle
$Q(4,q)$. He defined the point/line incidence graph $\Gamma_q$ of $Q(4,q)$ which is a $(q+1)$--regular graph of girth $8$ with $n_0(q+1,8)$ vertices. Hence, $\Gamma_q$ is a  $(q+1,8)$--cage. Note that, $\Gamma_q$ is isomorphic to the point/line incidence graph of $W(3,q)$.

For any generalized quadrangle $Q$ of order $(s,t)$ and every point $x$ of
$Q$, let $x^\bot$ denote the set of all points collinear with $x$. Note that in the incidence graph $x^\bot=N_2(x)$, with an abuse of notation supposing that $x\in \Gamma_q$ corresponds to the point $x\in Q$

If $X$ is a nonempty set of vertices of $Q$, then we define $X^\bot :=\bigcap_{x\in X}x^\bot$.
The \emph{span} of the pair $(x,y)$ is $sp(x, y)=\{x, y\}^{\bot\bot}=\{u\in P: u\in z^\bot \forall z\in x^\bot \cap y^\bot \}$, where $P$ denotes the set of points in $Q$. If $x$ and $y$ are not
collinear, then $\{x,y\}^{\bot\bot}$ is also called the \emph{hyperbolic line} through $x$ and $y$.
If the \emph{hyperbolic line} through two noncollinear points $x$ and $y$ contains precisely $t+1$ points, then the pair $(x, y)$ is called \emph{regular}.
A point $x$ is called \emph{regular} if the pair $(x, y)$ is regular for every point $y$ not
collinear with $x$.
It is important to recall that the concept of regular also exists for a graph to avoid confusion. Hence we will emphasize when regular refers to a point or a graph.

\begin{remark}\label{regular}\cite{PT2}
Every point in $W(q)$ is regular.
\end{remark}


There are several equivalent coordinatizations of these generalized quadrangles (cf. \cite{P1}, \cite{U1}, \cite{U2}, see also \cite{vanM}) each giving a labeling for the graph $\Gamma_q$. Now we present a further labeling of $\Gamma_q$, equivalent to previous ones (cf. \cite{AABL12}), which will be central for our constructions since it allows us to keep track of the properties (such as regularity and girth) of the small regular graphs of girth $7$  obtained from $\Gamma_q$.

\begin{definition}\label{gammaq}
Let $\mathbb{F}_q$ be a finite  field with $q\ge 2$ a prime power. Let $\Gamma_q=\Gamma_q[V_0,V_{1}]$ be a bipartite graph with vertex sets
$V_r=\{(a,b,c)_r, (q,q,a)_r: a\in \mathbb{F}_q \cup  \{q\} ,b, c \in \mathbb{F}_q \}$, $r=0,1$, and
edge set defined as follows:
$$ \begin{array}{l}\mbox{For all } a\in \mathbb{F}_q\cup \{q\}  \mbox{ and for all } b,c\in \mathbb{F}_q:\\[2ex]
N_{\Gamma_q}((a,b,c)_{1} )= \left\{\begin{array}{ll}
    \{(x,~ax+b,~a^2x+2ab+c)_{0}: x \in \mathbb{F}_q \}\cup \{(q,a,c)_{0} \}  &\mbox{ if }   a\in \mathbb{F}_q;
\\[2ex]
\{( c ,b,x )_{0}: x \in \mathbb{F}_q \}\cup \{(q,q,c)_{0} \}  &\mbox{
if }  a= q.
\end{array}\right.
\\
\mbox{}\\
N_{\Gamma_q}((q,q,a)_{1})= \{(q,a,x)_{0}: x \in \mathbb{F}_q \}\cup
\{(q,q,q)_{0} \}.
\end{array}
$$
\end{definition}

Note that, in the labeling introduced in Definition \ref{gammaq}, the second $q$ in $\mathbb{F}_q \cup  \{q\}$, usually denoted by $\infty$, is meant to be just a symbol and no operations will be performed with it.

To finish, we define a \emph{Latin square} as an $n\times n$ array filled with $n$ different symbols, each occurring exactly once in each row and exactly once in each column.

In the following two sections we present our results only for $(q+1,8)$-cages, but all preliminary results are valid for all $(k,8)$-cages with any number $k$ given that they have the required combinatorial properties.
\section{Constructions of small $(q+1;7)$--graphs, for an even prime power $q$ }
In this section we will consider a $(q+1,8)$-cage $\Gamma_q$ with $q+1\geq 5$ an odd integer, since the only known $(q+1,8)$-cages are obtained as the incidence graph of a Generalized Quadrangles, we let  $q\geq 4$ a power of two.

Let $x\in V(\Gamma_q)$ and let $N(x)=\{x_0,....,x_q\}$, label $N(x_i)=\{x_{i0},x_{i1},...,x_{iq}=x\}$, for all $i\in \{0,...q\}$, in the following way.
Take $x_{0j}$ and $x_{1j}$ arbitrarily for $j=0,\ldots, q-1$ and let $N_2(x_{0j})\cap N_2(x_{1j})-x=W_j$, note that $|W_j|=q$.
Let $x_{ij}=(\displaystyle \bigcap_{w\in W_j} N_2(w))\cap N(x_i)$, these vertices exist and are uniquely labeled since the generalized quadrangle $W(q)$ is regular.

Let $H=x\cup N(x)\cup \{x_{q-1},x_q\}\cup \displaystyle \bigcup^{q-2}_0 N(x_i) \subset V(\Gamma_q)$.

We will delete the set $H$ of vertices of $\Gamma_q$ and add matchings $M_Z$ between the remaining neighbors of such vertices in order to obtain a small regular graph of girth 7.
In order to define the sets $M_Z$, we denote $X_i=N(x_i)\setminus  \{x\}$  and $X_{ij}=N(x_{ij})\setminus \{x_i\}$, for  $i\in \{0,...,q\}$ and $j\in \{0,...,q-1\}$.

Let $\mathcal{Z}$ be the family of all $X_{q-1}X_q,X_{ij}$ for $i\in \{0,...,q-2\}$ and $j\in \{0,...,q-1\}$.
For each $Z\in\mathcal{Z}$, $M_Z$ will denote a perfect matching of $V(Z)$, which will eventually be added to $\Gamma_q$.

\begin{definition}\label{g1}
Let $\Gamma_q$ be a  $(q+1,8)$-cage, with odd degree $q+1\ge 5$.

 Let $\Gamma_q1$ be the graph with:
$V(\Gamma_q1):=V(\Gamma_q-H) \mbox{ and }
\displaystyle E(\Gamma_q1):= E(\Gamma_q-H)\cup \bigcup_{Z\in\mathcal{Z}} M_Z.$

\end{definition}

Observe that the graph $\Gamma_q1$ has order $|V(\Gamma_q)|-(q^2+2)$ and all its vertices have degree $q+1$.

Next proposition states a condition for the graph $\Gamma_q1$ to have girth 7, for this it is useful to state the following remark.

\begin{remark}\label{length}
Let $u,v\in V(\Gamma_q)$ a graph of girth 8, such that there is a $uv$-path $P$ of length $t<8$. Then every $uv$-path $P'$ such that $E(P)\cap E(P')=\emptyset$ has length $|E(P')|\ge 8-t$.
\end{remark}

\begin{proposition}\label{girthg1}
Let $\Gamma_q$ be a  $(q+1,8)$-cage, with odd degree $q+1\ge 5$ and $\Gamma_q1$ as in Definition \ref{g1}. Then $\Gamma_q1$ has girth 7 if given $u_1v_1\in M_{X_{ij}}$ and $u_2,v_2\in X_{kl}$ such that $d(u_1,u_2)=2$ and $d(v_1,v_2)=2$, it holds $u_2v_2\not\in M_{X_{kl}}$, for $i\not= k\in \{0,...,q-2\}$ and $j,l\in \{0,...,q-1\}$.
\end{proposition}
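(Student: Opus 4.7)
The plan is to show that, assuming the stated hypothesis, every cycle of $\Gamma_q1$ has length at least 7, and to exhibit one cycle of length 7. I classify cycles by the number $m$ of matching edges they contain, relying on Remark \ref{length} together with the bipartite structure of $\Gamma_q$: each set $X_{ij}$ (and similarly $X_{q-1}\cup X_q$) lies entirely in one of the two bipartition classes of $\Gamma_q$. Consequently every matching edge in $\bigcup_{Z\in\mathcal{Z}}M_Z$ joins two vertices of the same part while every edge of $\Gamma_q-H$ joins the two parts, so in any cycle of length $n$ with $m$ matching edges the number $n-m$ must be even.

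If $m=0$ the cycle lies in $\Gamma_q$ and has length at least $8$. If $m=1$, with matching edge $uv\in M_{X_{ij}}$, the remaining $uv$-arc is a path in $\Gamma_q-H$, and since $u,v$ share the single common neighbour $x_{ij}\in H$ in $\Gamma_q$, Remark \ref{length} forces this arc to have length at least $6$, so the cycle has length at least $7$. The case $m\ge 3$ gives at least six distinct matching endpoints (disjointness follows from pairwise disjointness of the sets $X_{ij}$, which in turn follows from $d_{\Gamma_q}(x_{ij},x_{kl})=4$ for $i\ne k$ and girth $8$); combined with the parity constraint and the fact that matching endpoints within the same part cannot be linked by $\Gamma_q-H$ edges, this rules out cycles of length at most $6$.

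The critical case is $m=2$, with matching edges $ab\in M_{X_{ij}}$ and $cd\in M_{X_{kl}}$, giving a cycle $a-b-P_1-c-d-P_2-a$ where $P_1,P_2$ are paths in $\Gamma_q-H$ whose lengths are both even by parity and sum to at most $4$. When $(i,j)=(k,l)$, the only length-$2$ $\Gamma_q$-path between endpoints of $X_{ij}$ goes through $x_{ij}\in H$, so Remark \ref{length} forces $|P_1|,|P_2|\ge 6$. When $i=k$ and $j\ne l$, the unique length-$4$ $\Gamma_q$-path between $X_{ij}$ and $X_{il}$ passes through $x_{ij},x_i,x_{il}\in H$, so Remark \ref{length} gives $|P_1|,|P_2|\ge 4$. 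In both situations the cycle length is at least $10$. The only remaining subcase is $i\ne k$ with $|P_1|=|P_2|=2$, and it produces a $6$-cycle realising exactly the forbidden configuration: setting $u_1=a$, $v_1=b$, $v_2=c$, $u_2=d$, the middle vertices of $P_1,P_2$ witness $d(v_1,v_2)=d(u_1,u_2)=2$ while $u_2v_2=cd\in M_{X_{kl}}$, contradicting the hypothesis. Finally, a $7$-cycle exists by completing any matching edge $uv\in M_{X_{ij}}$ with a length-$6$ $uv$-path in $\Gamma_q-H$, whose existence is guaranteed by the extremality of $\Gamma_q$ and the comparative smallness of $H$. The main obstacle I anticipate is the bookkeeping in the $m=2$ analysis, particularly ensuring that the $X_{ij}$ are pairwise disjoint when indices differ and that Remark \ref{length} is correctly applied inside $\Gamma_q-H$.
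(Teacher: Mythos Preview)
Your overall strategy --- classifying cycles by the number $m$ of matching edges they use --- is exactly the paper's approach, and your parity observation (that each $M_Z$ lies inside one colour class of $\Gamma_q$, forcing $n-m$ even) is a pleasant refinement the paper does not make explicit. However, your $m=2$ analysis has a genuine gap.

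You treat only the situation where both matching edges lie in some $M_{X_{ij}}$ with $i\in\{0,\dots,q-2\}$. But $\mathcal{Z}$ also contains $X_{q-1}$ and $X_q$, so the matchings $M_{X_{q-1}}$ and $M_{X_q}$ are present in $\Gamma_q1$ as well. You never discuss cycles using (i) one edge from $M_{X_{q-1}}$ and one from $M_{X_q}$, or (ii) one edge from $M_{X_{q-1}}\cup M_{X_q}$ and one from some $M_{X_{kl}}$. These require separate distance estimates: in (i) the relevant endpoints lie at distance $\ge 4$ in $\Gamma_q-H$ (since $x_{q-1},x_q$ share the deleted neighbour $x$), and in (ii) there is a length-$5$ path through $H$ forcing distance $\ge 3$ in $\Gamma_q-H$. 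Without these, your case $m=2$ is incomplete. The same omission weakens your $m\ge 3$ argument: when two consecutive matching edges lie in different colour classes (one in some $M_{X_{ij}}$, the other in $M_{X_{q-1}}\cup M_{X_q}$), the connecting path in $\Gamma_q-H$ has odd length, so parity alone does not exclude length $1$; you need the distance-$\ge 3$ estimate from (ii) to finish, and you never state it.

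A minor remark: the paper does not prove the existence of a $7$-cycle either, so your hand-wave there is no worse than the original --- but ``extremality of $\Gamma_q$ and comparative smallness of $H$'' is not an argument, and if you want a clean proof you should actually exhibit such a path.
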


\begin{proof}

Let us consider the distances (in $\Gamma_q-H$) between the elements in the sets $Z\in \mathcal{Z}.$
There are five possible cases:

\begin{enumerate}[(1)] \setlength\itemindent{0.6cm}
\item Two vertices in the same set $u,v\in Z$ have a common neighbor $w$ in $\Gamma_q$, therefore $d_{\Gamma_q-H}(u,v)\ge 6$.
\item If $u\in X_{q-1}$ and $v\in X_{q}$, then $d_{\Gamma_q-H}(u,v)\ge 4$, since $x_{q-1},x_q$ have $x$ as a common neighbor in $\Gamma_q$.
\item If $u\in X_i$ for $i\in \{q-1,q\}$ and $v\in X_{kj}$ for $k\in \{0,...,q-2\}$ and $j\in \{0,...,q-1\}$ then $d_{\Gamma_q}(u,x_i)=1$, $d_{\Gamma_q}(v,x_k)=2$, and $x_i,x_k$ have a common neighbor $x\in V(\Gamma_q)$, hence there is a $uv$-path of length 5 in $\Gamma_q$, concluding from Remark \ref{length} that $d_{\Gamma_q}(u,v)\ge 3$.
\item If $u\in X_{ij}$ and $v\in X_{ik}$ for $i\in \{0,...,q-2\}$ and $j,k\in \{0,...,q-1\}$, then $ux_{ij}x_{i}x_{ik}v$ is a path of length 4 and from Remark \ref{length} $d_{\Gamma_q-H}(u,v)\ge 4$.
\item If $u\in X_{ij}$ and $v\in X_{lk}$ for $i\not= l$, $i,l\in \{0,...,q-2\}$ and $j,k\in \{0,...,q-1\}$, then it is possible that there exist $w\in \Gamma_q-H$ such that $u,v\in N(w)$, that is $d_{\Gamma_q-H}(u,v)\ge 2$.

\end{enumerate}

\noindent Let us consider $C$ a shortest cycle in $\Gamma_q1$. If $E(C)\subset E(\Gamma_q-H)$ then $|C|\ge 8$.
Suppose $C$ contains edges in $\displaystyle M=\bigcup_{Z\in\mathcal{Z}} M_Z$.
If $C$ contains exactly one such edge, then by (1) $|C|\ge 7$.
If $C$ contains exactly two edges $e_1,e_2\in M$, the following cases arise.

\begin{enumerate}[.] \setlength\itemindent{0.6cm}
\item If both $e_1,e_2$ lie in the same $M_Z$ then by (1) $|C|\ge 14 >7$.
\item If $e_1\in M_{X_{q-1}}$ and $e_2\in M_{X_q}$ then by (2) $|C|\ge 10 >7$.
\item If $e_1\in M_{X_i}$ and $e_2\in M_{X_{kj}}$ then by (3) $|C|\ge 8 >7$.
\item If $e_1\in M_{X_{ij}}$ and $e_2\in M_{X_{ik}}$ then by (4) $|C|\ge 10 >7$.
\item If $e_1\in M_{X_{ij}}$ and $e_2\in M_{X_{lk}}$, for $i\not=l$, by hypothesis $|C|\ge 7$.
\end{enumerate}

\noindent If $C$ contains at least three edges of $M$, since $d(u,v)\ge 2$ for all $u,v\in \{X_{q-1}, X_q,X_{ij}\}$ with $i\in \{0,...,q-2\}$ and $j\in \{0,...,q-1\}$, $|C|\ge 9>7$.
\newline Hence $\Gamma_q1$ has girth 7 and we have finished the proof.
\end{proof}

The following lemma gives sufficient conditions to define the matchings $M_{X_{ij}}$ for the sets $X_{ij}$, for $i\in \{0,...,q-2\}$ and $j\in \{0,...,q-1\}$, in order to fulfill the condition from Proposition \ref{girthg1}.

\begin{lemma}\label{match}
There exist $q^2-q$ matchings $M_{X_{ij}}$, for each $i\in \{0,...,q-2\}$ and $j\in \{0,...,q-1\}$ with the following property:

Given $u_1v_1\in M_{X_{ij}}$ and $u_2,v_2\in X_{kj}$ such that $d(u_1,u_2)=2$ and $d(v_1,v_2)=2$ then $u_2v_2\not\in M_{X_{kj}}$.
\end{lemma}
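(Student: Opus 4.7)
My plan is to reduce the problem, for each fixed $j\in\{0,\dots,q-1\}$, to the existence of $q-1$ pairwise edge-disjoint perfect matchings on the $q$-element set $W_j$, i.e.\ to a $1$-factorization of $K_{W_j}$. Since $q$ is a power of $2$, $|W_j|=q$ is even and such a factorization exists classically; this is what will close the argument once the geometric translation is in place.

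First I would set up bijections $\phi_i\colon W_j\to X_{ij}$ for each $i\in\{0,\dots,q-2\}$. By the defining property $x_{ij}\in N_2(w)$, each $w\in W_j$ is collinear with $x_{ij}$ in the generalized quadrangle, hence joined to it by a unique line $\phi_i(w)\in N_{\Gamma_q}(x_{ij})$; a short girth-$8$ argument shows $\phi_i(w)\neq x_i$, so $\phi_i(w)\in X_{ij}$. Next, Remark \ref{regular} ensures that $(x_{0j},x_{1j})$ is a regular non-collinear pair, giving the hyperbolic line $\{x_{0j},x_{1j}\}^{\perp\perp}$ of size $q+1$ (which contains every $x_{ij}$) and forcing the $q+1$ points of $\{x_{0j},x_{1j}\}^\perp=W_j\cup\{x\}$ to be pairwise non-collinear. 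Since each of these points is collinear with $x_{ij}$, they lie on distinct lines through $x_{ij}$ and thus exhaust the $q+1$ lines of $N_{\Gamma_q}(x_{ij})$, with $x$ corresponding to $x_i$; restricting gives the bijection $\phi_i$.

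The core step is to prove that for $i\neq k$ in $\{0,\dots,q-2\}$ and $u_1\in X_{ij}$, $u_2\in X_{kj}$, the relation $d_{\Gamma_q}(u_1,u_2)=2$ is equivalent to $\phi_i^{-1}(u_1)=\phi_k^{-1}(u_2)$. A common point $p$ of $u_1$ and $u_2$ must be collinear with $x_{ij}$ and with $x_{kj}$; these two are non-collinear (otherwise $\Gamma_q$ would contain a $6$-cycle through $x,x_i,x_k$) and form a regular pair, so $|\{x_{ij},x_{kj}\}^\perp|=q+1$. Combined with the inclusion $\{x_{0j},x_{1j}\}^\perp\subseteq\{x_{ij},x_{kj}\}^\perp$ (points in the first set are collinear with every element of $\{x_{0j},x_{1j}\}^{\perp\perp}$, in particular with $x_{ij}$ and $x_{kj}$) this forces $\{x_{ij},x_{kj}\}^\perp=W_j\cup\{x\}$. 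The case $p=x$ is excluded, since then $u_1$ would be the unique line through $x$ and $x_{ij}$, namely $x_i\notin X_{ij}$; hence $p\in W_j$, giving $u_1=\phi_i(p)$ and $u_2=\phi_k(p)$. The converse is immediate, as $\phi_i(w)$ and $\phi_k(w)$ always meet at $w$.

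Pulling matchings back through $\phi_i$, setting $M_i:=\phi_i^{-1}(M_{X_{ij}})$, the lemma's condition translates exactly to the statement that for all $i\neq k$ in $\{0,\dots,q-2\}$ the perfect matchings $M_i$ and $M_k$ on $W_j$ are edge-disjoint. I would therefore pick any $1$-factorization of $K_{W_j}$, label its $q-1$ factors as $M_0,\dots,M_{q-2}$, and set $M_{X_{ij}}:=\phi_i(M_i)$; doing this independently for each of the $q$ values of $j$ produces the required $q(q-1)=q^2-q$ matchings. The main obstacle is the geometric identification of the third paragraph, pinning down $\{x_{ij},x_{kj}\}^\perp$ via the regular-point structure of $W(q)$; once this is done, the combinatorial finish via $1$-factorization is routine.
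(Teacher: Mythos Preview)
Your proposal is correct and follows essentially the same route as the paper: both establish, for each fixed $j$, a bijection between $W_j$ and each $X_{ij}$ (the paper via the labeling $h\mapsto x_{ijh}$, you via the maps $\phi_i$), then transport a $1$-factorization of $K_q$ on the $q$-element set $W_j$ to the desired matchings $M_{X_{ij}}$. The paper is terser---it asserts $\bigcap_i N(X_{ij})=W_j$ ``by definition'' and leaves implicit the fact that the distance-$2$ condition forces the $h$-indices to match---whereas you supply this justification explicitly by identifying $\{x_{ij},x_{kj}\}^\perp=W_j\cup\{x\}$ via the regular-pair structure of $W(q)$; but the underlying idea and the combinatorial finish are the same.
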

\begin{proof}
By definition $\displaystyle \bigcap^{q-2}_{i=0} N(X_{ij})=W_j$. Let $W_j=\{w_{j1},\ldots,w_{jq}\}$. Note that every vertex $w_{jh}$ is adjacent to exactly one vertex in $N(X_{ij})$ that we will denote as $x_{ijh}$, for each $i\in \{0,...,q-2\}$ and $j\in \{0,...,q-1\}$.

Observe that $x_{ijh}$ is well defined, because if $x_{ijh}$ had two neighbors $w_h,w_{h'}\in \bigcap^{q-2}_{i=0} N(X_{ij})$, $\Gamma_q$ would contain the cycle $x_{ijh}w_{jh'}x_{i'jh'}x_{i'j}x_{i'jh}w_{jh}$ of length 6.

Therefore, take the complete graph $K_{q}$ label its vertices as $h=1,\ldots,q$. We know that it has a 1-factorization with $q-1$ factors $F_1,\ldots,F_{q-1}$.
For each $i=0,\ldots,q-2$, let $x_{ijh}x_{ijh'}\in M_{X_{ij}}$ if and only if $hh'\in F_{i}$.

To prove that the matchings $M_{X_{ij}}$ defined in this way fulfill the desired property suppose that $x_{ijh}x_{ijh'}\in M_{X_{ij}}$ and $x_{i'jh}x_{i'jh'}\in M_{X_{i'j}}$ for $i'\not=i$, then $F_i$ and $F_{i'}$ would have the edge $hh'$ in common contradicting that they are a factorization.

Therefore, there exist $q^2-q$ matchings $M_{X_{ij}}$ with the desired property.
\end{proof}

To finish, notice that for $u_1v_1\in M_{X_{ij}}$ and $u_2,v_2\in X_{i'j'}$ with $j\not=j'$ and possibly $i=i'$, the distances $d(u_1,u_2)$ and $d(v_1,v_2)$ are at least 4.
Then, counting the number of vertices of $\Gamma_q1$ and using the Proposition \ref{girthg1} we have the following theorem.

\begin{theorem}\label{main}
Let $q\ge 4$ be a power of two. Then there is a $(q+1)$-regular graph of girth 7 and order $2q^3+q^2+2q$.
\end{theorem}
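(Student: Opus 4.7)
The statement is essentially a packaging result: every essential combinatorial ingredient is already in place, and the plan is simply to combine the construction of Definition \ref{g1} with the matchings supplied by Lemma \ref{match}, then invoke Proposition \ref{girthg1} for the girth and compute the resulting vertex count.

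First I would invoke Benson's cage \cite{B66}: since $q\ge 4$ is a power of two, the point/line incidence graph $\Gamma_q$ of the generalized quadrangle $W(3,q)$ is a $(q+1,8)$-cage with $n_0(q+1,8)=2(1+q+q^2+q^3)$ vertices. I would then apply Definition \ref{g1}, taking $M_{X_{ij}}$ (for $i\in\{0,\dots,q-2\}$ and $j\in\{0,\dots,q-1\}$) as produced by Lemma \ref{match}, and any perfect matching on $X_{q-1}\cup X_q$ for the remaining member of $\mathcal{Z}$ (such a matching exists because $q$ is even, so $|X_{q-1}\cup X_q|=2q$ is even). Regularity of $\Gamma_q1$ is immediate from the construction, since every vertex of $V(\Gamma_q)\setminus H$ that loses a neighbour to $H$ recovers exactly one through its matching edge. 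Using the remark immediately following Definition \ref{g1} that $|H|=q^2+2$, the vertex count is
\[
|V(\Gamma_q1)|=|V(\Gamma_q)|-|H|=2(1+q+q^2+q^3)-(q^2+2)=2q^3+q^2+2q.
\]

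For girth $7$ I would appeal to Proposition \ref{girthg1}. Its hypothesis requires that whenever $u_1v_1\in M_{X_{ij}}$ and $u_2,v_2\in X_{kl}$ with $i\ne k$ satisfy $d(u_1,u_2)=d(v_1,v_2)=2$, one has $u_2v_2\notin M_{X_{kl}}$. When $j=l$ this is exactly what Lemma \ref{match} delivers. When $j\ne l$ the short paragraph immediately preceding the theorem asserts that both distances are in fact at least $4$, so the hypothesis is vacuously satisfied in that regime. The only genuinely non-trivial point of the whole argument is this case split, reconciling the same-$j$ formulation of Lemma \ref{match} (handled by the $1$-factorization of $K_q$) with the broader $(j,l)$-range of Proposition \ref{girthg1} (handled by the distance observation); everything else, including the vertex count and the regularity check, is routine bookkeeping.
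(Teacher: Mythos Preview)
Your approach is exactly the paper's: assemble $\Gamma_q1$ from Definition \ref{g1} using the matchings of Lemma \ref{match}, verify the hypothesis of Proposition \ref{girthg1} via the $j=l$ / $j\ne l$ case split (the latter handled by the distance-$\ge 4$ observation just before the theorem), and count vertices.

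There is one slip, however. You treat $X_{q-1}\cup X_q$ as a single member of $\mathcal{Z}$ and allow \emph{any} perfect matching on it. In the paper $X_{q-1}$ and $X_q$ are separate members of $\mathcal{Z}$ (the proof of Proposition \ref{girthg1} refers to $M_{X_{q-1}}$ and $M_{X_q}$ individually), each of size $q$ and each carrying its own perfect matching. This distinction matters: a matching edge crossing from $u\in X_{q-1}$ to $v\in X_q$ could join two vertices at distance exactly $4$ in $\Gamma_q-H$ (case (2) in the proof of Proposition \ref{girthg1} gives only $d\ge 4$, not $d\ge 6$; two edge-disjoint $uv$-paths of length $4$ are perfectly compatible with girth $8$), and then the added edge would close a $5$-cycle. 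With separate matchings on $X_{q-1}$ and on $X_q$ (each of even size $q$, so such matchings exist), case (1) applies to every single matching edge and your argument goes through verbatim.
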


\section{Constructions of small $(q+1;7)$--graphs for and odd prime power $q$.}

In this section we will consider cages of even degree, that $\Gamma_q$ is a $(q+1,8)$-cage with $q$ an odd prime power. We proceed as before, but as will be evident from the proofs, the result is not as good as in the previous section.

We will delete a set $H$ of vertices of $\Gamma_q$ and add matchings $M_Z$ between the remaining neighbors of such vertices in order to obtain a small regular graph of girth 7.
The sets $H$ and $M_Z$ are defined as follows.

Let $V=\{x,y\}\cup \{s_0,\ldots,s_{q}\}$ be the vertices of $K_{2,q+1}$.

Let $\widehat{K_{2,q+1}}$ be the graph obtained subdividing each edge of $K_{2,q+1}$.

Let $\Gamma_q$ be a graph containing a copy of $\widehat{K_{2,q+1}}$ as a subgraph and label its vertices as $H'=\{x,y,s_0,\ldots,s_{q}\}\cup N(x)\cup N(y)$
where $N(x)=\{x_0,\ldots,x_q\}$ and $N(y)=\{y_0,\ldots,y_q\}$.
Note that $N(x_i)\cap N(y_i)=s_i$ for $i=0,\ldots, q$.
Define:
$$
  \begin{array}{lll}
    H & = & \{x,y,s_3,s_4\cdots,s_{q}\}\cup N(x)\cup N(y)\subset V(\Gamma_q);   \\
    X_i & = & N(x_i)\cap V(\Gamma_q-H), \quad i=0,\ldots, q; \\
    Y_i & = & N(y_i)\cap V(\Gamma_q-H), \quad i=0,\ldots, q; \\
    S_i & = & N(s_i)\cap V(\Gamma_q-H), \quad i=3,\ldots, q. \\
  \end{array}
$$

Notice that the vertices of $\Gamma_q-H$ have degrees $q - 1, q$ and $q + 1$.
The vertices $s_0,s_1,s_2$ of degree $q - 1$, those in $X_i\cup Y_i\cup S_i$ of degree $q$ and all the remaining vertices of $\Gamma_q-H$ have degree $q+1$.
Therefore, in order to complete the degrees to such vertices its necessary to add edges to $\Gamma_q-H$, we define such edges next.

Let $\mathcal{Z}$ be the family of all $X_i,Y_i,S_i$.
For each $Z\in\mathcal{Z}$, $M_Z$ will denote a perfect matching of $V(Z)$, which will eventually be added to $\Gamma_q$.

\resizebox{10cm}{!}{
\begin{minipage}[h]{10cm}
\begin{center}

\psset{unit=0.6cm, linewidth=0.025cm}
  \begin{pspicture}(0,-6)(24,7)

\cnode*(1.5   ,   0)   {0.15}   {  x }\put( 1   ,   -0.5   ){ $x$}

\cnode*(6   ,   6)   {0.15}   {  x1 }\put( 6   ,   5.5   ){ $x_0$}
\cnode*(6   ,   4)   {0.15}   {  x2 }\put( 6   ,   3.5   ){ $x_1$}
\cnode*(6   ,   2)   {0.15}   {  x3 }\put( 6   ,   1.5   ){ $x_2$}
\cnode*(6   ,   0)   {0.15}   {  x4 }\put( 6   ,   -0.5   ){ $x_3$}

\cnode*(6   ,   -1.5)   {0.05}   {  x5 }
\cnode*(6   ,   -2)   {0.05}   {  x6 }
\cnode*(6   ,   -2.5)   {0.05}   {  x7 }

\cnode*(6   ,  -4)   {0.15}   {  xk }\put( 6   ,   -4.5   ){ $x_q$}

\cnode*(9   ,   6)   {0.15}   {  s1 }\put( 9   ,   5.5   ){ $s_0$}
\cnode*(9   ,   4)   {0.15}   {  s2 }\put( 9   ,   3.5   ){ $s_1$}
\cnode*(9   ,   2)   {0.15}   {  s3 }\put( 9   ,   1.5   ){ $s_2$}
\cnode*(9   ,   0)   {0.15}   {  s4 }\put( 9   ,   -0.5   ){ $s_3$}

\cnode*(9   ,   -1.5)   {0.05}   {  s5 }
\cnode*(9   ,   -2)   {0.05}   {  s6 }
\cnode*(9  ,   -2.5)   {0.05}   {  s7 }

\cnode*(9   ,   -4)   {0.15}   {  sk }\put( 9  ,   -4.5   ){ $s_q$}

\cnode*(12   ,   6)   {0.15}   {  y1 }\put( 11.5   ,   5.5   ){ $y_0$}
\cnode*(12   ,   4)   {0.15}   {  y2 }\put( 11.5   ,   3.5   ){ $y_1$}
\cnode*(12   ,   2)   {0.15}   {  y3 }\put( 11.5   ,   1.5   ){ $y_2$}
\cnode*(12   ,   0)   {0.15}   {  y4 }\put( 11.5   ,   -0.5   ){ $y_3$}

\cnode*(12   ,   -1.5)   {0.05}   {  y5 }
\cnode*(12   ,   -2)   {0.05}   {  y6 }
\cnode*(12  ,   -2.5)   {0.05}   {  y7 }

\cnode*(12   ,   -4)   {0.15}   {  yk }\put( 11.5  ,   -4.5   ){ $y_q$}

\cnode*(16.5  ,   0)   {0.15}   {  y }\put(16.5   ,   -0.5   ){ $y$}

\ncline[]{-}{x}{x1} \ncline[]{-}{x}{x2} \ncline[]{-}{x}{x3} \ncline[]{-}{x}{x4}  \ncline[]{-}{x}{xk}

\ncline[]{-}{x1}{s1} \ncline[]{-}{x2}{s2} \ncline[]{-}{x3}{s3} \ncline[]{-}{x4}{s4} \ncline[]{-}{xk}{sk}

\ncline[]{-}{y1}{s1} \ncline[]{-}{y2}{s2} \ncline[]{-}{y3}{s3} \ncline[]{-}{y4}{s4} \ncline[]{-}{yk}{sk}

\ncline[]{-}{s1}{y1} \ncline[]{-}{s2}{y2} \ncline[]{-}{sk}{yk}

\ncline[]{-}{y}{y1} \ncline[]{-}{y}{y2} \ncline[]{-}{y}{y3} \ncline[]{-}{y}{y4}  \ncline[]{-}{y}{yk}

\psline[]{-}(6,6)(6.87,6.5) \psline[]{-}(6,6)(6.5,6.87)
\pscircle(6.87,6.87){0.37}
\psline[]{-}(9,6)(8.13,5.5) \psline[]{-}(9,6)(8.5,5.13)
\pscircle(8.13,5.13){0.37}
\psline[]{-}(12,6)(11.13,6.5) \psline[]{-}(12,6)(11.5,6.87)
\pscircle(11.13,6.87){0.37}

\psline[]{-}(6,4)(6.87,4.5) \psline[]{-}(6,4)(6.5,4.87)
\pscircle(6.87,4.87){0.37}
\psline[]{-}(9,4)(8.13,3.5) \psline[]{-}(9,4)(8.5,3.13)
\pscircle(8.13,3.13){0.37}
\psline[]{-}(12,4)(11.13,4.5) \psline[]{-}(12,4)(11.5,4.87)
\pscircle(11.13,4.87){0.37}

\psline[]{-}(6,2)(6.87,2.5) \psline[]{-}(6,2)(6.5,2.87)
\pscircle(6.87,2.87){0.37}
\psline[]{-}(9,2)(8.13,1.5) \psline[]{-}(9,2)(8.5,1.13)
\pscircle(8.13,1.13){0.37}
\psline[]{-}(12,2)(11.13,2.5) \psline[]{-}(12,2)(11.5,2.87)
\pscircle(11.13,2.87){0.37}

\psline[]{-}(6,0)(6.87,0.5) \psline[]{-}(6,0)(6.5,0.87)
\pscircle(6.87,0.87){0.37}
\psline[]{-}(9,0)(8.13,-0.5) \psline[]{-}(9,0)(8.5,-0.87)
\pscircle(8.13,-0.87){0.37}
\psline[]{-}(12,0)(11.13,0.5) \psline[]{-}(12,0)(11.5,0.87)
\pscircle(11.13,0.87){0.37}

\psline[]{-}(6,-4)(6.87,-3.5) \psline[]{-}(6,-4)(6.5,-3.13)
\pscircle(6.87,-3.13){0.37}
\psline[]{-}(9,-4)(8.13,-4.5) \psline[]{-}(9,-4)(8.5,-4.87)
\pscircle(8.13,-4.87){0.37}
\psline[]{-}(12,-4)(11.13,-3.5) \psline[]{-}(12,-4)(11.5,-3.13)
\pscircle(11.13,-3.13){0.37}
     \end{pspicture}
\end{center}
\end{minipage}
}

\begin{definition}\label{g1g2}
Let $\Gamma_q$ be a  $(q+1,8)$-cage, with even degree $q+1\ge 6$.
\begin{itemize}
\item Let $\Gamma_q1$ be the graph with:
$V(\Gamma_q1):=V(\Gamma_q-H) \mbox{ and }
\displaystyle E(\Gamma_q1):= E(\Gamma_q-H)\cup \bigcup_{Z\in\mathcal{Z}} M_Z.$

\item Define $\Gamma_q2$ as
$V(\Gamma_q2):=V(\Gamma_q1)$ and

$E(\Gamma_q2):= (E(\Gamma_q1)\setminus \{u_0v_0,u_1v_1,u_2v_2\})\cup \{s_0u_0,s_0v_0,s_1u_1,s_1v_1,s_2u_2,s_2v_2\},$

 where $s_i\in H'-H$,
the deleted edges $u_iv_i$ belong to $M_{X_i}$ in $\Gamma_q1$ and they are replaced by the paths of length two $u_is_iv_i$, $i=0,1,2$.
\end{itemize}

\end{definition}

By an immediate counting argument we know that the graph $\Gamma_q1$ has order $|V(\Gamma_q)|-3(q+1)+1$, and observe that all vertices in $\Gamma_q1$ have degree $q+1$ except for $s_0s_1,s_2$ which remain of degree $q-1$. Hence, by the definition of $E(\Gamma_q2)$, all vertices in $\Gamma_q2$ are left with degree $q+1$.

\begin{proposition}\label{struct}
Let $\Gamma_q$ be a  $(q+1,8)$-cage, with even degree $q\ge 5$ and $\Gamma_q1$, $\Gamma_q2$ be as in Definition \ref{g1g2}.

\begin{itemize}
\item[(i)] $\Gamma_q1$ has girth 7 if the matchings $M_{S_i},M_{X_i}$ and $M_{Y_i}$ have the following properties:
    \begin{itemize}
    \item[(a)]Given $u_1v_1\in M_{S_i}$ and $u_2,v_2\in S_j$ such that $d(u_1,u_2)=2$ and $d(v_1,v_2)=2$, it holds that $u_2v_2\not\in M_{S_j}$.
    \item[(b)]Given $u_1v_1\in M_{X_i}$ and $u_2,v_2\in Y_j$ such that $d(u_1,u_2)=2$ and $d(v_1,v_2)=2$, it holds that $u_2v_2\not\in M_{Y_j}$.
    \end{itemize}

\item[(ii)]
    If conditions (a) and (b) hold then the graph $\Gamma_q2$ also has girth 7.

\end{itemize}

\end{proposition}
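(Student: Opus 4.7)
The plan is to follow the strategy of Proposition \ref{girthg1}: in each case (i) and (ii) I take a shortest cycle $C$ in the graph at hand and bound its length by case-analysis on how many of its edges belong to the matching set $M=\bigcup_{Z\in\mathcal{Z}}M_Z$. Every such bound will ultimately come from lower bounds on distances in $\Gamma_q-H$ between the sets $Z\in\mathcal{Z}=\{X_i,Y_i,S_i\}$, themselves obtained from Remark \ref{length} and the fact that $\Gamma_q$ has girth $8$.

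For part (i), I first enumerate the distances in $\Gamma_q-H$. Two vertices in the same set $Z$ share a neighbour in $H$ (one of $x_i$, $y_i$, $s_i$), so by Remark \ref{length} they lie at distance $\ge 6$. Vertices in distinct sets $X_i,X_j$ or $Y_i,Y_j$ with $i\ne j$ are at distance $\ge 4$, since the unique short path goes through $H$; the same is true for $(X_i,Y_i)$. Mixed pairs $(X_i,S_j)$ and $(Y_i,S_j)$ are at distance $\ge 3$, while the pairs $(X_i,Y_j)$ and $(S_i,S_j)$ with $i\ne j$ are the only ones whose distance can drop to $2$, via a common neighbour lying outside $H$. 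With these bounds, a shortest cycle $C$ with $k$ matching edges satisfies $|C|\ge 8$ if $k=0$, $|C|\ge 7$ if $k=1$, and $|C|\ge 3k\ge 9$ if $k\ge 3$. For $k=2$ one runs through all pairs of matching sets and verifies $|C|\ge 8$ in every combination except $M_{X_i}/M_{Y_j}$ and $M_{S_i}/M_{S_j}$ with $i\ne j$, where the only way to obtain a $6$-cycle is the configuration excluded by hypotheses (b) and (a); hence the girth of $\Gamma_q1$ is at least $7$, and a $7$-cycle is realised by closing any single matching edge through a length-$6$ path in $\Gamma_q-H$.

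For part (ii), I view $\Gamma_q2$ as obtained from $\Gamma_q1$ by subdividing the three matching edges $u_iv_i\in M_{X_i}$ ($i=0,1,2$) with the vertices $s_0,s_1,s_2$, and then adjoining each $s_i$ to its remaining $q-1$ original $\Gamma_q$-neighbours in $\Gamma_q-H$. A shortest cycle $C$ in $\Gamma_q2$ falls in three categories. If $C\cap\{s_0,s_1,s_2\}=\emptyset$, then $C$ lies in $\Gamma_q1\setminus\{u_0v_0,u_1v_1,u_2v_2\}\subset\Gamma_q1$ and part (i) gives $|C|\ge 7$. If $C$ meets some $s_i$ through both $s_iu_i$ and $s_iv_i$, contracting the length-$2$ subpath $u_is_iv_i$ back to the matching edge $u_iv_i$ produces a cycle of $\Gamma_q1$ of length $|C|-1\ge 7$, hence $|C|\ge 8$. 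Otherwise $C$ uses $s_i$ through at least one of its $q-1$ original $\Gamma_q$-neighbours; here the arcs of $C$ outside $\{s_0,s_1,s_2\}$ together with the involved $\Gamma_q$-edges at $s_i$ form a walk to which the same distance analysis applies, now also using $d_{\Gamma_q}(s_i,s_j)=4$ (with both length-$4$ paths through $x$ and through $y$ destroyed by $H$) and $d_{\Gamma_q}(s_i,Z)\ge 3$ for every $Z\in\mathcal{Z}$ of which $s_i$ is not the centre.

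The main technical obstacle is this last sub-case of part (ii): one must combine the distance bounds between sets $Z$, those between $s_i$ and other $s_j$, and those between $s_i$ and the various $Z$'s, and then confirm that no combination of matching edges together with original $\Gamma_q$-edges at $s_0,s_1,s_2$ can produce a cycle shorter than $7$. All the ingredients are consequences of the explicit incidences of $\widehat{K_{2,q+1}}$ recorded in Definition \ref{g1g2} together with the girth-$8$ hypothesis, but the bookkeeping is non-trivial and mirrors, in a more intricate form, the argument already carried out for part (i).
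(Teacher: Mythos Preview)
Your proof of part (i) is essentially the paper's proof: the same six-fold distance analysis in $\Gamma_q-H$ followed by the same case split on the number of matching edges in a shortest cycle. Your extra observation that $d(X_i,Y_i)\ge 4$ is correct (a common neighbour would close a $6$-cycle through $x_i s_i y_i$) but is not needed, since the paper simply folds this into case~(6) with the weaker bound $\ge 2$.

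For part (ii) your framing contains a small but real misconception. You describe $\Gamma_q2$ as obtained from $\Gamma_q1$ by ``subdividing $u_iv_i$ with $s_i$ and then adjoining each $s_i$ to its $q-1$ original $\Gamma_q$-neighbours''. In fact $s_0,s_1,s_2$ are \emph{already} vertices of $\Gamma_q1$ (they lie in $\Gamma_q-H$) and already carry those $q-1$ edges; the passage to $\Gamma_q2$ only deletes $u_iv_i$ and adds $s_iu_i,s_iv_i$. This matters for your third sub-case: a shortest cycle $C$ can pass through $s_i$ using \emph{only} old $\Gamma_q$-edges, in which case $C$ is already a cycle of $\Gamma_q1$ and part (i) applies directly. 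Your trichotomy, as written, routes this situation into the ``hard'' case and then gestures at ``non-trivial bookkeeping''.

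The paper avoids this by organising part (ii) around the number of \emph{new} edges $s_iu_i,s_iv_i$ that $C$ contains, and by measuring distances in $\Gamma_q1$ rather than in $\Gamma_q-H$. With zero new edges $C\subset\Gamma_q1$; with a single new edge $s_iu_i$ one uses that $s_i,u_i\in N_{\Gamma_q}(x_i)$, hence $d_{\Gamma_q1}(s_i,u_i)\ge 6$; with the pair $u_is_iv_i$ one contracts as you do; and with new edges at two different $s_i,s_j$ one invokes $d_{\Gamma_q1}(s_i,s_j),\,d_{\Gamma_q1}(s_i,u_j),\,d_{\Gamma_q1}(u_i,u_j)\ge 4$. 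This is the cleaner decomposition, and it makes the ``bookkeeping'' you anticipate unnecessary: once part (i) is in hand, the $\Gamma_q1$-distance bounds replace any further appeal to the matching hypotheses (a) and~(b).
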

\begin{proof}
To prove $(i)$ let us consider the distances (in $\Gamma_q-H$) between the elements in the sets $Z\in \mathcal{Z}.$
There are six possible cases:

\begin{enumerate}[(1)] \setlength\itemindent{0.6cm}
\item Two vertices in the same set $u,v\in Z$ have a common neighbor $w$ in $\Gamma_q$, therefore $d_{\Gamma_q-H}(u,v)\ge 6$.
\item If $u\in X_i$ and $v\in X_j$ then $d_{\Gamma_q-H}(u,v)\ge 4$, given that $x_i,x_j$ have $x$ as a common neighbor in $\Gamma_q$.
\item If $u\in Y_i$ and $v\in Y_j$ then $d_{\Gamma_q-H}(u,v)\ge 4$, as before.
\item If $u\in S_i$ and $v\in S_j$ then it is possible that there exist $w\in \Gamma_q-H$ such that $u,v\in N(w)$, that is, $d_{\Gamma_q-H}(u,v)\ge 2$.
\item If $u\in S_i$ and $v\in X_j\cup Y_j$ then $d_{\Gamma_q-H}(u,v)\ge 3$, since $s_i\in N(x_i)\cap N(y_i)$.
\item If $u\in X_i$ and $v\in Y_j$ then $d_{\Gamma_q-H}(u,v)\ge 2$.
\end{enumerate}

\noindent Let us consider $C$ a shortest cycle in $\Gamma_q1$. If $E(C)\subset E(\Gamma_q-H)$ then $|C|\ge 8$.
Suppose $C$ contains edges in $\displaystyle M=\bigcup_{Z\in\mathcal{Z}} M_Z$.
If $C$ contains exactly one such edge, then by (1) $|C|\ge 7$.
If $C$ contains exactly two edges $e_1,e_2\in M$, the following cases arise:

\begin{enumerate}[.] \setlength\itemindent{0.6cm}
\item If both $e_1,e_2$ lie in the same $M_Z$, then by (1) $|C|\ge 14 >7$.
\item If $e_1\in M_{X_i}$ and $e_2\in M_{X_j}$ for $i\not=j$, by (2) $|C|\ge 10 >7$.
\item If $e_1\in M_{Y_i}$ and $e_2\in M_{Y_j}$ for $i\not=j$, by (3) $|C|\ge 10 >7$.
\item If $e_1\in M_{S_i}$ and $e_2\in M_{X_j}\cup M_{Y_j}$, by (5) $|C|\ge 8 >7$.
\item If $e_1\in M_{S_i}$ and $e_2\in M_{S_j}$ for $i\not=j$, by the first hypothesis in item (i)(b) $|C|\ge 7$.
\item If $e_1\in M_{X_i}$ and $e_2\in M_{Y_j}$, by the second hypothesis in item (i)(b) $|C|\ge 7$.
\end{enumerate}

\noindent If $C$ contains at least three edges of $M$, since $d(u,v)\ge 2$ for all $u,v\in \{X_i\cup Y_i\}^{k}_{i=1}\cup \{S_i\}^{k}_{i=4}$, $|C|\ge 9>7$.
\newline Hence $\Gamma_q1$ has girth 7, concluding the proof of $(i)$.

To prove $(ii)$, let $C$ be a shortest cycle in $\Gamma_q2$. If $E(C)\subset E(\Gamma_q-H)\cup M$ then $|C|\ge 7$.

\begin{enumerate}[.] \setlength\itemindent{0.6cm}
\item If $C$ contains exactly one edge $s_iu_i$ or $s_iv_i$ then $|C|\ge 7$ since $d_{\Gamma_q}(s_i,u_i)=d_{\Gamma_q}(s_i,v_i)=2$ which implies $d_{\Gamma_q1}(s_i,u_i)\ge 6$ and $d_{\Gamma_q1}(s_i,v_i)\ge 6$.

\item If $C$ contains a path $u_is_iv_i$ then $(C\setminus u_is_iv_i)\cup u_iv_i$ is a cycle in $\Gamma_q1$ with one less vertex than $C$, therefore $|C|\ge 8$.

\item If $C$ contains two edges $s_iu_i$, $s_ju_j$, for $i\not=j$. Their distances $d_{\Gamma_q1}(s_i,u_j)\ge 4$, $d_{\Gamma_q1}(s_i,s_j)\ge 4$, and $d_{\Gamma_q1}(u_i,u_j)\ge 4$, therefore in any case $C$ has length greater than 7 concluding the proof.
\end{enumerate}
\end{proof}


The following lemma gives sufficient conditions to define the matchings $M_{S_i}$ for the sets $S_i$, in order that they fulfill condition $(a)$ from Proposition \ref{struct} (i). Notice that in the incidence graph of a generalized quadrangle $\{x,y\}^{\bot\bot}=\bigcap _{s\in N_2(x)\cap N_2(y)}N_2(s)$, thus Remark \ref{regular} implies that $\displaystyle|\bigcap^{q}_{i=0} N(S_i)|=q-1$, recalling that $\{s_i\}_{i=0}^q=N_2(x)\cap N_2(y)$.
Since $\displaystyle|\bigcap^{q}_{i=0} N(S_i)|$ is contained in $\displaystyle|\bigcap^{q}_{i=3} N(S_i)|$, and $\displaystyle|\bigcap^{q}_{i=3} N(S_i)|\le |S_i|=q-1$ then the condition for the following lemma holds.

\begin{lemma}\label{eses}
If $\displaystyle|\bigcap^{q}_{i=3} N(S_i)|=q-1$ then there exist matchings $M_{S_i}$, for $i=3,\ldots,{q}$, such that:
    \begin{itemize}
    \item[-]Given $u_1v_1\in M_{S_i}$ and $u_2,v_2\in S_j$ such that $d(u_1,u_2)=2$ and $d(v_1,v_2)=2$, it holds that $u_2v_2\not\in M_{S_j}$.
    \end{itemize}
\end{lemma}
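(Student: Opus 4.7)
The plan is to mirror the strategy of Lemma~\ref{match}. The hypothesis $|\bigcap_{i=3}^{q} N(S_i)|=q-1$ supplies a common index set whose cardinality matches that of each $S_i$, so a uniform labeling of $S_3,\ldots,S_q$ is possible; a $1$-factorization of $K_{q-1}$ (which exists because $q$ is odd, hence $q-1$ is even) will then distribute matching edges across those sets in a mutually compatible way.

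First, I would set $W:=\bigcap_{i=3}^{q} N(S_i)=\{w_1,\ldots,w_{q-1}\}$ and prove that for every $i\in\{3,\ldots,q\}$ the map $\varphi_i\colon W\to S_i$ sending $w_h$ to its neighbor in $S_i$ is well defined and bijective. Well-definedness follows from the girth $8$ of $\Gamma_q$: two distinct neighbors $s,s'\in S_i$ of a single $w_h$ would produce the $4$-cycle $w_h s s_i s' w_h$. Injectivity is obtained by assuming $\varphi_i(w_h)=\varphi_i(w_{h'})=:s$ with $h\ne h'$ and picking any $j\ne i$ in $\{3,\ldots,q\}$: depending on whether $\varphi_j(w_h)=\varphi_j(w_{h'})$ or not, one obtains a $4$-cycle or the $6$-cycle $w_h\,\varphi_j(w_h)\,s_j\,\varphi_j(w_{h'})\,w_{h'}\,s\,w_h$ in $\Gamma_q$. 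Surjectivity is then automatic since $|W|=|S_i|=q-1$. Writing $s_{ih}:=\varphi_i(w_h)$, one gets $S_i=\{s_{i1},\ldots,s_{i,q-1}\}$. Next, fix a $1$-factorization of $K_{q-1}$ on vertex set $\{1,\ldots,q-1\}$ into $q-2$ perfect matchings $F_3,\ldots,F_q$ and set
\[
M_{S_i}:=\{\,s_{ih}s_{ih'}\ :\ hh'\in F_i\,\},
\]
which is a perfect matching of $S_i$ by construction.

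It remains to verify the required property. Given $u_1v_1\in M_{S_i}$ and $u_2,v_2\in S_j$ with $d(u_1,u_2)=d(v_1,v_2)=2$, write $u_1=s_{ih}$ and $v_1=s_{ih'}$; the girth $8$ of $\Gamma_q$, together with the regularity encoded in Remark~\ref{regular}, should force the common neighbor of $u_1,u_2$ to be precisely $w_h$ and that of $v_1,v_2$ to be precisely $w_{h'}$, whence $u_2=s_{jh}$ and $v_2=s_{jh'}$. Consequently, $u_2v_2\in M_{S_j}$ would yield $hh'\in F_i\cap F_j$, contradicting that $F_i,F_j$ are distinct factors of a $1$-factorization. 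The principal obstacle is precisely this last identification, namely ruling out common neighbors of vertices of $S_i$ and $S_j$ that lie outside $W$; this is where the girth of $\Gamma_q$ must be combined with the quadrangle regularity (exactly the content of the hypothesis) to exclude all spurious length-$2$ detours between the two sets.
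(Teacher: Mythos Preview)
Your proposal is correct and follows essentially the same approach as the paper: label each $S_i$ via its unique neighbors in the common set $W=\bigcap_{i=3}^{q}N(S_i)$, then transport the factors of a $1$-factorization of $K_{q-1}$ onto the $S_i$'s. You are in fact more careful than the paper on one point: the paper silently assumes that any length-$2$ path between $S_i$ and $S_j$ passes through $W$, whereas you flag this as the principal obstacle; it is easily dispatched by noting that a generalized-quadrangle count gives $|N(S_i)\cap N(S_j)|=q-1$ for $i\neq j$, which together with the hypothesis $|W|=q-1$ forces $N(S_i)\cap N(S_j)=W$.
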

\begin{proof}
Let us suppose that $\bigcap^{q}_{i=3} N(S_i)=\{w_1,\ldots,w_{q-1}\}$, and since $S_i$ has $q-1$ vertices, every vertex $w_j$ is adjacent to exactly one vertex in $s_{ij}\in S_i$.

Observe that $s_{ij}$ is well defined, because if $s_{ij}$ had two neighbors $w_j,w_{j'}\in \bigcap^{q+1}_{i=1} N(S_i)$, $\Gamma_q$ would contain the cycle $(s_{ij}w_js_{kj}s_ks_{kj'}w_{j'})$ of length 6.

Therefore, take the complete graph $K_{q-1}$, label its vertices as $j=1,\ldots,q-1$. We know that it has a 1-factorization with $q-2$ factors $F_1,\ldots,F_{q-2}$.
For each $i=3,\ldots,q+1$, let $s_{ij}s_{il}\in M_{S_i}$ if and only if $jl\in F_{i-3}$.

To prove that the matchings $M_{S_i}$ defined in this way fulfill the desired property suppose that $s_{ij}s_{il}\in M_{S_i}$ and $s_{i'j}s_{i'l}\in M_{S_i'}$ for $i'\not=i$. Then $F_i$ and $F_{i'}$ would have the edge $jl$ in common contradicting that they were a factorization.
\end{proof}

So far, the steps of our construction have been independent from the coordinatization of the chosen $(q+1,8)$-cage, however, in order to define $M_{X_i}$ and $M_{Y_i}$ satisfying condition (b) of Lemma \ref{struct}, we need to fix all the elements chosen so far.

We will distinguish two cases, when $q$ is a prime or when $q$ is a prime power.

Choose $x=(q,q,q)_1$, $y=(0,0,0)_1$.

When $q$ is a prime then $x_{i}=(q,q,i)_0$, $y_{i}=(i,0,0)_0$ for $i=0,\ldots, q$.

Therefore, $N(x_i)=\{(q,t,i)_1:t=0,\ldots, q-1\}\cup x$ and $N(x_q)=\{(q,q,t)_1:t=0,\ldots q-1\}\cup x$; $N(y_i)=\{(t,-{it},{i+t^2})_1:t=0,\ldots q-2\}\cup (q,0,i)_1$ and $N(y_q)=\{(0,t,0)_1:t=0,\ldots q-1\}\cup (q,q,0)_1$.

Thus, the corresponding vertices $s_i$ are: $s_i=(q,0,i)_1$ for $i=0,\ldots q-1$ and $s_q=(q,q,0)_1$;
$N(s_i)=\{(i,0,t)_0:t=1,\ldots, q-1,i=0,\ldots,q\} \cup \{x_i,y_i\}$. Hence,
$S_i=\{(i,0,t)_0:t=1,\ldots, q-1,i=0,\ldots,q\}$.

Then $N(S_i)=\{(a,b,c)_1: b=-ia, c=t+a^2{i}, i=0,\ldots,q-1 \}$, and $N(S_q)=\{(q,0,t)_1:t=0,\ldots,q-1\}$.

Solving the equations we obtain $N(S_i)\cap N(S_j)=\{(0,0,t)_1: t=0,\ldots,q-1\}$, moreover $N(i,0,t)_0\cap N(j,0,t)_0=(0,0,t)_1$, for each $j\not=i$ and $t=0,\ldots,q-1$, or equivalently, $N(0,0,t)_1=\{(x,0,t)_0:t=0,\ldots,q-1,x=0,\ldots,q\}$. Hence the sets $S_i$ satisfy the hypothesis of Lemma \ref{eses}, yielding that there exist the matchings $M_{S_i}$ with the desired property.

Notice that the sets $X_i$ and $Y_i$ are naturally defined as the sets $X_i=\{(q,t,i)_1:t=1,\ldots, q-1,i=0,\ldots, q-1\}$, $X_0=\{(q,t,0)_1:t=1,\ldots, q-1\}$ and $X_q=\{(q,q,t)_1:t=1,\ldots, q-1\}$. The sets $Y_i=\{(t,-{it},{it^2})_1:t=1,\ldots, q-1,i=0,\ldots, q-1\}$, and $Y_q=\{(0,t,0)_1:t=1,\ldots, q-1\}$.

In this way we have defined all the sets in Lemma \ref{struct}, and from Lemma \ref{eses} we know that the matchings $M_{S_i}$ have the property that:
\begin{itemize}
    \item[-]If $u_1v_1\in M_{S_i}$ and $u_2,v_2\in S_j$ are such that $d(u_1,u_2)=2$ and $d(v_1,v_2)=2$ then $u_2v_2\not\in M_{S_j}$.
\end{itemize}

It remains to define the matchings $M_{X_i}$ and $M_{Y_i}$ and prove they have property $(b)$ from Proposition \ref{struct} $(i)$.

For this we must analyze the intersection of the second neighborhood of an $X_j$ with an $Y_i$, $N_2(X_j)\cap Y_i$. For each $w\in Y_i$, we know there is exactly one $z\in X_q$ such that $w\in N_2(z)$.

This allows us to define the following sets of latin squares:
For each $j$, let the coordinate $i\ell $ of the $j$-th latin square to have the symbol $s_{i\ell j}$ if there is a $w_{i\ell j}=(a,b,c)_1$ such that
$$w_{i\ell j}\in N((i,0,0)_0)\cap N_2((q,\ell ,j)_1)\cap N_2((q,q,s_{i\ell j})_1),$$
 where $(i,0,0)_0=y_i$, $(q,\ell ,j)_1\in X_j$ and $(q,q,s_{i\ell j})_1\in X_q$.

Since $N((i,0,0)_0)=\{(t,-{it},{i+t^2})_1:t=0,\ldots q-2\}\cup (q,0,{i})_1$, then $a=t$, $b=-it$, and $c=i+t^2$.

Observe that $w_{i\ell j}\in N_2((q,\ell ,j)_1)$ is equivalent to $(j,\ell ,t)_0\in N((a,b,c)_1)$, since $N((q,\ell ,j)_1)=\{(j,\ell ,t)_0:t=0,\ldots q-1\}\cup\{(q,q,j)_0\}$. Hence, $aj+b=\ell $.

And $w_{i\ell j}\in N_2((q,q,s_{i\ell j})_1)$ implies $a=s_{i\ell j}$.

Therefore we obtain the following equation for $s_{i\ell j}$.

$$s_{i\ell j}(j-i)=\ell $$

Notice that this equation is undefined for $j=i$, otherwise it would mean that $y_i$ has a neighbor at distance 3 from $x_j$ and this would imply the existence of a cycle of length 6 in $\Gamma_q$.

Also from the equation we deduce that $-s_{i\ell j}=s_{i-\ell j}$, and $s_{i+1\ell j+1}=s_{i\ell j}$.
This means that the $i+1$-th row of the $j+1$-th latin square is equal to the $i$-th row of the $j$-th latin square, hence all the set of latin squares have the same rows.
This also implies that if we put an edge between two vertices on $Y_i$, $(s_{i\ell j},-{is_{i\ell j}},{is_{i\ell j}^2})_1$ and $(-s_{i\ell j},{is_{i\ell j}},is_{i\ell j}^2)_1$, it will have at distance two in $X_j$ only the vertices $(q,\ell ,i)_1$ and $(q,-\ell ,i)_1$.

Therefore, the matchings $M_{X_i}= \{(q,\ell ,i)_1(q,-(\ell+2) ,i)_1:i=0,\ldots q-1,\ell =1,...,q-3\}\cup \{(q,-2,i)_1(q,-1,i)_1:i=0,\ldots q-1\}$, $M_{X_q}=\{(q,q,\ell )_1(q,q,-(\ell+2) )_1:\ell =1,\ldots, q-3\}\cup \{(q,q,-2 )_1(q,q,-1)_1\}$, and $M_{Y_i}=\{(t,-{it},{it^2})_1(-t,{it},it^2)_1:i=0,\ldots, q-1,t=1,\ldots, q-1\}$, have the property $(b)$ from Proposition \ref{struct} $(i)$.

When $q$ is a prime power, let $\alpha$  a primitive root of unity in $GF(q)$.
Then,
$x_{i}=(q,q,\alpha^{i-1})_0$, $y_{i}=(\alpha^{i-1},0,0)_0$ for $i=1,\ldots q-1$, $x_0=(q,q,0)_0$, and $y_{0}=(0,0,0)_0$. Moreover, $x_q=(q,q,q)_0$ and $y_{q}=(q,0,0)_0$.

Therefore, $N(x_i)=\{(q,\alpha^t,\alpha^{i-1})_1:t=0,\ldots q-2\}\cup (q,0,\alpha^{i-1})_1\cup x$ and $N(x_0)=\{(q,\alpha^t,0)_1:t=0,\ldots q-2\}\cup (q,0,0)_1\cup x$; $N(y_i)=\{(\alpha^t,-\alpha^{i-1+t},\alpha^{i-1+2t})_1:t=0,\ldots q-2\}\cup (q,0,\alpha^{i-1})_1$ and $N(y_0)=\{(\alpha^t,0,0)_1:t=0,\ldots q-2\}\cup (q,0,0)_1$; $N(x_q)=\{(q,q,\alpha^t)_1:s=0,\ldots q-2\}\cup (q,q,0)_1\cup x$; and $N(y_q)=\{(0,\alpha^t,0)_1:t=0,\ldots q-2\}\cup (q,q,0)_1\cup y$.

Thus, the corresponding vertices $s_i$ are: $s_i=(q,0,\alpha^{i-1})_1$, for $i=1,\ldots q-1$, $s_0=(q,0,0)_1$ and $s_q=(q,q,0)_1$;
$N(s_i)=\{(\alpha^{i-1},0,\alpha^t)_0:t=0,\ldots q-2,i=1,\ldots,q-1\} \cup \{x_i,y_i\}$, and $N(s_0)=\{(0,0,\alpha^t)_0:t=0,\ldots q-2\} \cup \{x_0,y_0\}$. Hence $S_i=\{(\alpha^{i-1},0,\alpha^t)_0:t=0,\ldots q-2,i=0,\ldots,q\}$ and $S_0=\{(0,0,\alpha^t)_0:t=0,\ldots q-2\}$.

Then $N(S_i)=\{(a,b,c)_1: b=-\alpha^{i-1}a, c=\alpha^t+a^2\alpha^{i-1}, i=1,\ldots,q-1 \}$, $N(S_0)=\{(a,b,c)_1: b=0, c=\alpha^t\}$ and $N(S_q)=\{(q,0,\alpha^t)_1:t=0,\ldots,q-2\}\cup (q,0,0)_1$.

Solving the equations we obtain $N(S_i)\cap N(S_j)=\{(0,0,\alpha^t)_1: t=0,\ldots,q-2\}$. Moreover, $N(\alpha^{i-1},0,\alpha^t)_0\cap N(\alpha^{j-1},0,\alpha^t)_0=(0,0,\alpha^t)_1$, for each $j\not=i$ and $t=0,\ldots,q-2$, or equivalently, $N(0,0,\alpha^t)_1=\{(\alpha^x,0,\alpha^t)_0:x=0,\ldots,q-2\}\cup (0,0,\alpha^t)_0\cup (q,0,\alpha^t)_0$, for each $t=0,\ldots,q-2$. Hence the sets $S_i$ satisfy the hypothesis of Lemma \ref{eses} yielding that there exist the matchings $M_{S_i}$ with the desired property.

Notice that the sets $X_i$ and $Y_i$ are naturally defined as the sets $X_i=\{(q,\alpha^{t},\alpha^{i-1})_1:t=0,\ldots, q-2,i=1,\ldots, q-1\}$, $X_0=\{(q,\alpha^{t},0)_1:t=0,\ldots, q-2\}$ and $X_q=\{(q,q,\alpha^{t})_1:t=0,\ldots q-2\}$. The sets

$Y_i=\{(\alpha^t,-\alpha^{i-1+t},\alpha^{i-1+2t})_1:t=0,\ldots, q-2\}$, $Y_0=\{(\alpha^t,0,0)_1:t=0,\ldots, q-2\}$ and $Y_q=\{(0,\alpha^t,0)_1:t=0,\ldots, q-2\}$.

In order to define the matchings $M_{X_i}$ and $M_{Y_i}$ and prove that they have the property $(b)$ from Proposition \ref{struct} $(i)$,
we proceed as before, by defining the sets of latin squares:

For each $j$, let the coordinate $i\ell $ of the $j$-th latin square to have the symbol $s_{i\ell j}\in\{0,\ldots, q-2\}$ if there is a $w_{i\ell j}=(a,b,c)_1$ such that
$$w_{i\ell j}\in N((\alpha^{i-1},0,0)_0)\cap N_2((q,\alpha^\ell,\alpha^{j-1})_1)\cap N_2((q,q,\alpha^{s_{i\ell j}})_1)  \mbox{ for $i,j\ge 1$,} $$
 where $(\alpha^{i-1},0,0)_0=y_i$, $(q,\alpha^\ell,\alpha^{j-1})_1\in X_j$ and $(q,q,\alpha^{s_{i\ell j}})_1\in X_q$.

Since $N((\alpha^{i-1},0,0)_0)=\{(\alpha^t,-\alpha^{i-1+t},\alpha^{i-1+2t})_1:t=0,\ldots q-2\}\cup (q,0,{i})_1$, then $a=\alpha^t$, $b=-\alpha^{i-1+t}$, and $c=\alpha^{i-1+2t}$.

Also $w_{i\ell j}\in N_2((q,\alpha^\ell,\alpha^{j-1})_1)$ is equivalent to $(\alpha^{j-1},\alpha^\ell ,\alpha^t)_0\in N((a,b,c)_1)$, since $N((q,\alpha^\ell,\alpha^{j-1})_1)=\{(\alpha^{j-1},\alpha^\ell ,\alpha^t)_0:t=0,\ldots q-2\}$. Hence
$a\alpha^{j-1}+b=\alpha^\ell $.

And $w_{i\ell j}\in N_2((q,q,\alpha^{s_{i\ell j}})_1)$ implies $a=\alpha^{s_{i\ell j}}$.

Therefore we obtain the following equation for $s_{i\ell j}$.

$$\alpha^{s_{i\ell j}}(\alpha^{j-1}-\alpha^{i-1})=\alpha^{\ell} $$

Notice that this equation is undefined for $j=i$, otherwise it would mean that $y_i$ has a neighbor at distance 3 from $x_j$ and this would imply the existence of a cycle of length 6 in $\Gamma_q$.

For $i=0$, we obtain the equation $\alpha^{s_{0\ell j}}(\alpha^{j-1})=\alpha^{\ell}$, and for $j=0$, we obtain $\alpha^{s_{i\ell 0}}(-\alpha^{i-1})=\alpha^{\ell}$.
From the equation we obtain that $s_{i\ell+1 j}=s_{i\ell j}+1$, and each latin square is the sum table of the cyclic group $\mathbb{Z}_{q-1}$ with the rows permuted.

Multiplying by $\alpha$ the equation $\alpha^{s_{i\ell-1 j}}(\alpha^{j-1}-\alpha^{i-1})=\alpha^{\ell-1}$, we obtain that $s_{i+1\ell j+1}=s_{i\ell-1 j}$.
This implies that the row $i+1$ of the $j+1$-th latin square is equal to the row $i$ of the $j$-th latin square subtracting 1 to each symbol (i.e.,  $s_{i+1\ell j+1}+1=s_{i\ell j}$).
That is, all the set of latin squares have the same rows but in a different order.

This also implies that if we put an edge between two vertices on $Y_i$, $(\alpha^{s_{i\ell j}},-\alpha^{i-1+s_{i\ell j}},\alpha^{i-1+2s_{i\ell j}})_1$ and $(\alpha^{s_{i\ell j}+1},-\alpha^{i-1+(s_{i\ell j}+1)},\alpha^{i-1+2(s_{i\ell j}+1)})_1$, it will have at distance two in $X_j$ only the vertices, $(q,\alpha^\ell ,i)_1$ and $(q,\alpha^{\ell +1},i)_1$ and the other way around.

Therefore, the matchings $M_{X_i}= \{(q,\alpha^{2\ell} ,i)_1(q,\alpha^{2\ell+1},i)_1:i=0,\ldots q-1,\ell =1,...,(q-1)/2\}$, $M_{X_q}=\{(q,q,\alpha^{2\ell} )_1(q,q,\alpha^{2\ell+1} )_1:\ell =1,\ldots, (q-1)/2\}$, and $M_{Y_i}=\{(\alpha^{2t},-\alpha^{i-1+2t},\alpha^{i-1+4t})_1(\alpha^{2t+3},-\alpha^{i-1+(2t+3)},\alpha^{i-1+2(2t+3)})_1:i=0,\ldots q-1,t=1,...,(q-1)/2\}$ have the property $(b)$ from Proposition \ref{struct} $(i)$, proving the theorem for $q$ prime power.
\begin{theorem}\label{main}
Let $q\ge 5$ be a prime power. Then there is a $q+1$-regular graph of girth 7 and order $2q^3+2q^2-q+1$.  
\end{theorem}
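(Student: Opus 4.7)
The plan is to realize the claimed graph as $\Gamma_q2$ from Definition~\ref{g1g2} applied to the $(q+1,8)$-cage $\Gamma_q$ in the coordinatization of Definition~\ref{gammaq}, starting from $x=(q,q,q)_1$ and $y=(0,0,0)_1$. Once a primitive root $\alpha$ of $\mathbb{F}_q$ is chosen, the vertices $x_i,y_i,s_i$ and hence the residual sets $X_i,Y_i,S_i$ are determined explicitly. The remaining work is to exhibit matchings $M_{S_i},M_{X_i},M_{Y_i}$ satisfying conditions (a) and (b) of Proposition~\ref{struct}; that proposition then immediately delivers girth $7$ for $\Gamma_q2$, and the counting formula gives the stated order.

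For the matchings $M_{S_i}$ I would invoke Lemma~\ref{eses}. Its hypothesis $|\bigcap_{i=3}^{q}N(S_i)|=q-1$ follows from Remark~\ref{regular}: since every point of $W(q)$ is regular, $|\{x,y\}^{\bot\bot}|=q-1$, and a short computation with Definition~\ref{gammaq} gives $\bigcap_{i=0}^{q}N(S_i)=\{(0,0,\alpha^{t})_1:t=0,\ldots,q-2\}$. This set has cardinality $q-1$ and sits inside $\bigcap_{i=3}^{q}N(S_i)\subseteq S_i$ of size $q-1$, forcing equality throughout.

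The technical core is the definition of $M_{X_i},M_{Y_i}$ and the verification of condition (b). For each $j\neq i$ I would build a latin square whose $(i,\ell)$-entry $s_{i\ell j}$ is the unique symbol such that some $w\in N(y_i)$ lies simultaneously at distance $2$ from $(q,\alpha^{\ell},\alpha^{j-1})_1\in X_j$ and from $(q,q,\alpha^{s_{i\ell j}})_1\in X_q$. Feeding Definition~\ref{gammaq} into these three conditions collapses them to the scalar equation
\[
\alpha^{s_{i\ell j}}(\alpha^{j-1}-\alpha^{i-1})=\alpha^{\ell},
\]
which is undefined precisely when $j=i$; this is consistent with the fact that a solution would otherwise produce a $6$-cycle in $\Gamma_q$. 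Two symmetries follow at once, $s_{i,\ell+1,j}=s_{i\ell j}+1$ and $s_{i+1,\ell,j+1}+1=s_{i\ell j}$: the first says each latin square is the Cayley table of $\mathbb{Z}_{q-1}$ with rows permuted, and the second says the squares are diagonal shifts of one another, so they all have the same row set.

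Exploiting this rigidity, the matchings would pair $(q,\alpha^{2\ell},\alpha^{i-1})_1$ with $(q,\alpha^{2\ell+1},\alpha^{i-1})_1$ in $M_{X_i}$, define $M_{X_q}$ analogously, and pair $(\alpha^{2t},-\alpha^{i-1+2t},\alpha^{i-1+4t})_1$ with $(\alpha^{2t+3},-\alpha^{i-1+(2t+3)},\alpha^{i-1+2(2t+3)})_1$ in $M_{Y_i}$. For any edge $uv\in M_{X_i}$ the only candidates $u',v'\in X_j$ with $d(u,u')=d(v,v')=2$ are the consecutive-exponent pair singled out by the latin-square equation, and the parity conventions above are exactly what prevent this pair from appearing as a matching edge of $M_{Y_j}$, and symmetrically in the other direction. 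With (a) and (b) established, Proposition~\ref{struct} yields girth $7$, and every vertex has degree $q+1$ by construction; the count $|V(\Gamma_q)|-|H|+1=2(q^3+q^2+q+1)-(3q+2)+1=2q^3+2q^2-q+1$ completes the argument. The main obstacle I anticipate is the bookkeeping with parities and powers of $\alpha$ in this last step, since everything else is a direct application of the already-proved lemmas and propositions.
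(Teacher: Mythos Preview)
Your proposal is correct and follows the paper's approach essentially line for line: the same coordinatization with $x=(q,q,q)_1$, $y=(0,0,0)_1$, the appeal to Lemma~\ref{eses} via regularity of $W(q)$ for the $M_{S_i}$, the same latin-square equation $\alpha^{s_{i\ell j}}(\alpha^{j-1}-\alpha^{i-1})=\alpha^{\ell}$ with its two shift symmetries, the same explicit matchings $M_{X_i},M_{X_q},M_{Y_i}$, and the final invocation of Proposition~\ref{struct}(ii). The only cosmetic difference is that the paper splits off the case $q$ prime before treating the general prime-power case, whereas you go straight to the $\alpha$-formulation; and your vertex count is organized slightly differently from the paper's $2(q^3+q^2+q+1)-(q-3+2(q+2))$, though both arrive at $2q^3+2q^2-q+1$.
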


\begin{proof}
Finally, by applying Lemma \ref{struct}(ii), we obtain a $q+1$-regular graph of girth 7 with $2(q^3+q^2+q+1)-(q-3+2(q+2))=2q^3+2q^2-q+1$ vertices.
\end{proof}

\end{document}